\documentclass[a4paper,12pt]{article}
\usepackage[numbers,sort&compress]{natbib}
\usepackage{amsmath,amsthm,amsfonts,amssymb,enumerate,calc,graphicx,iwona,float}
\usepackage[colorlinks=true,citecolor=black,linkcolor=black,urlcolor=blue]{hyperref}
\usepackage[tmargin=30mm,bmargin=30mm,rmargin=30mm,lmargin=30mm]{geometry}
\usepackage[noabbrev,capitalise]{cleveref}
\newcommand{\arXiv}[1]{arXiv:\,\href{http://arxiv.org/abs/#1}{#1}}

\newcommand{\msn}[1]{MR:\,\href{http://www.ams.org/mathscinet-getitem?mr=MR#1}{#1}}

\newcommand{\doi}[1]{doi:\,\href{http://dx.doi.org/#1}{#1}}

\setlength{\parindent}{0cm}
\setlength{\parskip}{2ex}
\renewcommand{\baselinestretch}{1.2}
\usepackage[hang]{footmisc}

\setlength{\footnotesep}{\baselinestretch\footnotesep}
\renewcommand{\thefootnote}{\fnsymbol{footnote}}	
\allowdisplaybreaks
\sloppy
\newcommand\DateFootnote{
\begingroup
\renewcommand\thefootnote{}
\footnote{\today}
\setcounter{footnote}{0}
\vspace*{-3ex}
\endgroup}
\makeatletter
\renewcommand\section{\@startsection {section}{1}{\z@}{-3ex \@plus -1ex \@minus -.2ex}{2ex \@plus.2ex}{\normalfont\large\bfseries}}
\renewcommand\subsection{\@startsection{subsection}{2}{\z@}{-2.5ex\@plus -1ex \@minus -.2ex}{1.5ex \@plus .2ex}{\normalfont\normalsize\bfseries}}
\renewcommand\subsubsection{\@startsection{subsubsection}{3}{\z@}{-2ex\@plus -1ex \@minus -.2ex}{1ex \@plus .2ex}{\normalfont\normalsize\bfseries}}
 \renewcommand\paragraph{\@startsection{paragraph}{4}{\z@}{1.5ex \@plus.5ex \@minus.2ex}{-1em}{\normalfont\normalsize\bfseries}}
\renewcommand\subparagraph{\@startsection{subparagraph}{5}{\parindent}  {1.5ex \@plus.5ex \@minus .2ex}  {-1em} {\normalfont\normalsize\bfseries}}
\makeatother
\renewcommand{\thefootnote}{\fnsymbol{footnote}}

\newcommand{\ceil}[1]{\lceil{#1}\rceil}
\newcommand{\floor}[1]{\lfloor{#1}\rfloor}

\renewcommand{\geq}{\geqslant}
\renewcommand{\leq}{\leqslant}

\DeclareMathOperator{\td}{td}

\theoremstyle{plain}
\newtheorem{theorem}{Theorem}
\newtheorem{lemma}[theorem]{Lemma}

\newtheorem{proposition}[theorem]{Proposition}

\begin{document}

{\Large\bfseries\boldmath\scshape Colourings with Bounded Monochromatic\\ Components in Graphs of given Circumference}

\medskip
Bojan Mohar\footnotemark[3] \quad
Bruce Reed\footnotemark[2] \quad
David R. Wood\footnotemark[4]

\DateFootnote

\footnotetext[3]{Department of Mathematics, Simon Fraser University, Burnaby, Canada, (\texttt{mohar@sfu.ca}). On leave from IMFM, Ljubljana, Slovenia. Research supported by an NSERC Discovery Grant, CRC program, and in part by ARRS, Research Program P1-0297.}

\footnotetext[2]{CNRS, Projet COATI, I3S (CNRS and UNS) UMR7271 and INRIA, Sophia Antipolis, France,  (\texttt{reed@i3s.unice.fr}). Instituto Nacional de Matem\'atica Pura e Aplicada (IMPA), Brasil. Visiting Research Professor, ERATO Kawarabayashi Large Graph Project, Japan. }

\footnotetext[4]{School of Mathematical Sciences, Monash University, Melbourne, Australia, \texttt{david.wood@monash.edu}. Supported by the Australian Research Council.}

\emph{Abstract.} We prove that every graph with circumference at most $k$ is $O(\log k)$-colourable such that every monochromatic component has size at most $k$. The $O(\log k)$ bound on the number of colours is best possible, even in the setting of colourings with bounded monochromatic degree.

%

\hrulefill

In a vertex-coloured graph, a \emph{monochromatic component} is a connected component of the subgraph induced by all the vertices of one colour.  As a relaxation of proper colouring, recent work has focused on graph colourings with monochromatic components of bounded size \cite{ADOV-JCTB03,EJ14,LO15a,BK14,EO16,AUW15} or bounded monochromatic degree \citep{OOW16,Archdeacon1987,EKKOS2014,HS2006,DKMR2014,BK13,BIMR11,MR3528002,MR3300755,MR3095439,MR2876359,MR2489421}.

The \emph{circumference} of a graph $G$ is the length of the longest cycle if $G$ contains a cycle,  and is 2 if $G$ is a forest. This paper studies colourings of graphs of given circumference with monochromatic components of bounded size. Our primary goal is to minimize the number of colours, while reducing the order of the monochromatic components is a secondary objective.

Let $g(k)$ be the minimum integer $c$ for which there exists an integer $d$ such that every graph with circumference at most $k$ has a $c$-colouring in which every monochromatic component has order at most $d$. Our main result is that $g(k)=\Theta(\log k)$. First we prove the upper bound.

\begin{theorem}
\label{Circumference}
For every integer $k \geq 2$, every graph $G$ with circumference at most $k$ is $(3 \log_{2} k)$-colourable such that every monochromatic component has order at most $k$.
\end{theorem}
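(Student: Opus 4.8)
The plan is to exploit a depth-first search layering and then colour by a hierarchical, adaptive scheme spread over $\Theta(\log_2 k)$ scales. First I would reduce to the case that $G$ is connected (colour the components independently, reusing colours) and fix a DFS tree $F$ of $G$, rooted arbitrarily, writing $\mathrm{depth}(v)$ for the distance from $v$ to the root in $F$. The one structural fact I need is that every non-tree edge $uv$ joins $v$ to one of its ancestors $u$: if $u$ is an ancestor of $v$ at depths $d_u<d_v$, then the tree path from $u$ to $v$ together with $uv$ is a cycle of length $d_v-d_u+1\le k$, so $d_v-d_u\le k-1$. Writing $L_i$ for the set of vertices at depth $i$, every edge therefore joins some $L_i$ and $L_j$ with $1\le|i-j|\le k-1$. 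In particular each layer $L_i$ is an independent set, and deleting any $k-1$ consecutive layers disconnects the lower layers from the higher ones.

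Next I would use the layering to localise monochromatic components. Grouping the layers into blocks of $k$ consecutive layers, the span bound $|i-j|\le k-1$ forces each edge to stay inside a block or to join two consecutive blocks; giving the even-indexed and odd-indexed blocks disjoint colour palettes then confines every monochromatic component to a single block, i.e.\ to a window of at most $k$ consecutive layers. It remains to colour one block --- a graph of circumference at most $k$ drawn on at most $k$ independent layers --- with $O(\log_2 k)$ colours so that monochromatic components have order at most $k$, and this block colouring is the heart of the matter. Within a block I would proceed recursively, at each of the $\lceil\log_2 k\rceil$ steps halving the range of layers still under consideration and spending a bounded number of colours.

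The main obstacle is precisely this block colouring, and the crucial point is that it cannot be a function of the layer index alone. If a single vertex is joined to arbitrarily many vertices of one other layer within distance $k-1$ (a star, which has circumference $2$ and so is permitted), then those two layers are forced to receive different colours; avoiding this for all pairs of layers inside a window of size $k$ would already cost $k$ colours. The recursion must therefore be adaptive, at each scale peeling off the vertices that send many edges towards a given part of the current window, and here the circumference enters a second time: by the Erd\H{o}s--Gallai theorem a graph of circumference at most $k$ is $(k-1)$-degenerate, hence sparse enough that these ``heavy'' sets can be removed using only $O(1)$ colours per scale. Finally, to certify that every monochromatic component has order at most $k$ I would use that any connected monochromatic subgraph lies in the DFS-subtree of its topmost vertex and, by construction, meets only a controlled set of layers; bounding the number of vertices that can accumulate there, and tightening the per-scale cost so that the colours total exactly $3\log_2 k$, is the delicate part of the argument.
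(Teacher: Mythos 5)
There is a genuine gap, and you half-acknowledge it yourself: the block-colouring step is not a smaller subproblem but essentially the original problem restated. After the DFS layering and the even/odd block trick (which are fine --- back edges do span at most $k-1$ levels, and two disjoint palettes do confine monochromatic components to one block), what remains is to colour a graph of circumference at most $k$ admitting a DFS forest of depth at most $k$ with $O(\log_2 k)$ colours and components of order at most $k$. But the hard instances already live in a single block: the lower-bound construction in this paper (iterated addition of a dominating vertex to $d$ copies, \cref{LowerBound}) has circumference $2^k$ and no path of order $2^{k+1}$, hence DFS depth below twice its circumference, so the layering gives no traction on it. Your proposed resolution --- peeling ``heavy'' vertices at each of $\lceil\log_2 k\rceil$ scales using $(k-1)$-degeneracy from Erd\H{o}s--Gallai --- does not close this: degeneracy $k-1$ is enormous relative to a budget of $O(1)$ colours per scale, no argument is given that the peeled sets can be coloured with bounded monochromatic components, and nothing forces the circumference (or any other progress measure) to halve when you pass to the next scale. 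The ancestor-closure observation about DFS subtrees is true but by itself bounds nothing.

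The missing idea, as it turns out, is a different progress measure entirely: the paper inducts on the circumference itself via longest cycles rather than on a layering. One first reduces to the $3$-connected case by splitting along separators of size at most $2$, with the statement strengthened to allow a pre-coloured clique $C$, $|C|\leq 2$, so that the two sides' colourings can be merged (this strengthening is exactly what your scheme lacks a counterpart of). In a $3$-connected $G$ with a cycle $Q$ of length $k$, any cycle of length at least $\lceil\tfrac12(k-5)\rceil$ must meet $Q$: otherwise three disjoint paths between the two cycles yield three cycles of total length exceeding $3k$, forcing a cycle longer than $k$. Hence deleting $V(Q)$ --- one new colour class of order at most $k$ --- drops the circumference to roughly $\tfrac12 k$, and induction spends $3$ colours per halving, giving $\lfloor 3\log_2 k\rfloor$ in total. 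If you want to salvage your approach, you would need an analogue of this halving step that works inside a block, and the star obstruction you correctly identified shows it cannot be done layer-by-layer; the longest-cycle deletion sidesteps layers altogether.
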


This result is implied by the following lemma with $C=\emptyset$. A \emph{clique} is a set of pairwise adjacent vertices.

\begin{lemma}
For every integer $k \geq 2$, for every graph $G$ with circumference at most $k$ and for every pre-coloured clique $C$ of size at most $2$ in $G$,
there is a $\floor{3\log_2 k}$-colouring of $G$ such that every monochromatic component has order at most $k$ and
every monochromatic component that intersects $C$ is contained in $C$.
\end{lemma}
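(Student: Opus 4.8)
The plan is to prove the stronger, clique-relative statement by induction on $k$, the extra hypothesis on the pre-coloured clique $C$ being exactly what makes the induction close. The base cases $k\in\{2,3\}$ are easy: such graphs are very restricted, and in any case $\lfloor 3\log_2 k\rfloor\ge 3$ colours leave enough freedom to extend the pre-colouring of an edge or vertex $C$ while keeping every monochromatic component a single vertex or contained in $C$. So I assume $k\ge 4$ and that the statement holds for all smaller values.

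First I would record the one structural fact about circumference that drives everything. We may assume $G$ is connected (colour each component separately; $C$ lies in one of them). Run a breadth-first search with the whole clique $C$ as the level-$0$ set, and let $L_0=C,L_1,L_2,\dots$ be the resulting distance layers. Every edge of $G$ joins vertices in the same or in consecutive layers, so along any cycle the layer index changes by at most $1$ per step; since a cycle of length $\ell\le k$ returns to its starting layer, it meets at most $\lfloor k/2\rfloor+1$ consecutive layers. \emph{This is the key lemma}: every cycle of $G$ is confined to a window of $\lfloor k/2\rfloor+1$ consecutive layers. Two consequences will be used repeatedly. (i) Every monochromatic component we build will lie in such a window, so two windows more than $\lfloor k/2\rfloor$ layers apart never share a monochromatic component and may \emph{reuse} the same colours. (ii) Every connected subgraph contained in a single layer has diameter at most $k$: a path of length $m$ inside layer $L_i$ closes up, through the two BFS-paths from its ends to $C$, into a cycle of length at least $m$, so $m\le k$.

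The heart of the argument is a reduction that passes from circumference $k$ to circumference $\lfloor k/2\rfloor$ at the cost of three colours. Using the layering I would select a set $S$ that is cheap to colour and whose deletion halves the circumference: delete $S$, colour it with three fresh colours so that each monochromatic component of $G[S]$ has order at most $k$, and then apply the induction hypothesis to each component $H$ of $G-S$ — which has circumference at most $\lfloor k/2\rfloor$ — using a disjoint palette of $\lfloor 3\log_2\lfloor k/2\rfloor\rfloor=\lfloor 3\log_2 k\rfloor-3$ colours. For each such $H$ the new pre-coloured clique $C_H$ is the set of vertices of $H$ sending an edge into $S$; the role of the clique hypothesis is that $|C_H|\le 2$ and that colouring $H$ relative to $C_H$ prevents any monochromatic component of $H$ from merging with one in $S$. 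Hence the global monochromatic components are exactly those of $S$ together with those produced inside the pieces, all of order at most $k$. Since distinct components of $G-S$ are pairwise non-adjacent, the same palette is reused across all of them, and the colour count is $3+(\lfloor 3\log_2 k\rfloor-3)=\lfloor 3\log_2 k\rfloor$. Taking $C=\emptyset$ then recovers the Theorem.

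The main obstacle is the construction of $S$ and, above all, the verification that $G[S]$ is three-colourable with monochromatic components of order at most $k$. It is tempting to take $S$ to be every $\lfloor k/2\rfloor$-th layer, but this fails: a long cycle can live inside only two consecutive layers (an \emph{oscillating} cycle in a near-bipartite part), so deleting whole layers need not lower the circumference at all. The resolution I would pursue exploits the two consequences above. Consequence (ii) shows that the troublesome within-layer pieces have bounded diameter, so they may be re-layered and the same halving applied one dimension down; consequence (i) caps the recursion at depth $\log_2 k$ and licenses the colour reuse that keeps the total at $O(\log k)$ rather than $O(\log n)$. Arranging that $S$ simultaneously (a)~halves the circumference, (b)~admits a three-colouring with monochromatic components of order at most $k$ — here one uses that the relevant pieces are essentially paths or bipartite graphs, broken into blocks of length at most $k$ exactly as a path is two-coloured — and (c)~presents each piece $H$ with a pre-coloured clique of size at most $2$, is the crux, and is where the bulk of the case analysis will lie.
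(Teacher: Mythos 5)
There is a genuine gap, and you have located it yourself: the set $S$ that simultaneously halves the circumference and is cheaply colourable is never constructed, and your closing sentence concedes that this ``crux'' is unresolved. The BFS-layering route cannot be patched in the way you sketch. Your consequence (ii) bounds the \emph{diameter} of connected within-layer subgraphs, not their \emph{order} or their structure: a single layer can induce arbitrarily large stars or trees, so the pieces are not ``essentially paths or bipartite graphs'' that can be broken into blocks of length at most $k$, and the oscillating cycles you correctly identify defeat deletion of whole layers. The paper's engine is entirely different and does not use layerings at all. It first reduces to the 3-connected case via a minimal separation $(G_1,G_2)$ of order at most $2$: the separator is turned into a clique by adding an edge (which does not raise the circumference, by minimality of the separation), $G_1$ is coloured with $C$ precoloured, and $G_2$ is then coloured with the now-coloured separator as its precoloured clique. \emph{This} is the sole job of the size-$2$ precoloured clique hypothesis. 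Then, in a 3-connected $G$ containing a cycle $Q$ of length exactly $k$ (else induct on $k-1$), every cycle of length at least $\lceil\frac12(k-5)\rceil$ must intersect $Q$: otherwise Menger gives three disjoint paths between the two cycles, producing three cycles of total length exceeding $3k$, hence one of length greater than $k$, a contradiction. So $S=V(Q)\cup C$ is the halving set: $G-S$ has circumference at most $\lfloor\frac12 k\rfloor$, is coloured by induction with a palette disjoint from the at most two colours on $C$, and $S\setminus C$ (order at most $k$) receives one further fresh colour, giving $\lfloor 3\log_2\lfloor\frac12 k\rfloor\rfloor+3\le\lfloor 3\log_2 k\rfloor$ colours, exactly the three-colours-per-halving accounting you anticipated.

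A second, independent error: you assert that for each component $H$ of $G-S$ the attachment set $C_H$ (vertices of $H$ with a neighbour in $S$) satisfies $|C_H|\le 2$, attributing this to the clique hypothesis. Nothing supports this, and it is false in general --- in a 3-connected graph every component of $G-S$ attaches to $S$ in at least three vertices, and $C_H$ need not be a clique. Fortunately no such condition is needed: since the palette used on $G-S$ is disjoint from the colours on $S$, no monochromatic component can cross between them, so the paper applies the induction hypothesis to $G-S$ with \emph{no} precoloured vertices. Your colour budget and the taking of $C=\emptyset$ to recover the theorem are sound, but without the intersection property of a longest cycle (or some substitute), the proof does not go through.
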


\begin{proof}
We proceed by  induction on $k+|V(G)|$. The result is trivial if $|V(G)|\leq 2$. Now assume $|V(G)|\geq 3$. 

First suppose that $k=2$. Then $G$ is a forest, which is properly 2-colourable. If $|C|\le 1$ or $|C|=2$ and two colours are used on $C$, we obtain the desired colouring (with $2< \floor{3\log_2 k}$ colours). Otherwise, $|C|=2$ with the same colour on the vertices in $C$. Contract the edge $C$ and 2-colour the resulting forest by induction, to obtain the desired colouring of $G$. Now assume that $k \geq 3$.

Suppose that $G$ is not $3$-connected. Then $G$ has a minimal separation $(G_1,G_2)$ with $S := V(G_1 \cap G_2)$ of size at most 2. If $|S|=2$, then add the edge on $S$ if the edge is not already present. Consider both $G_1$ and $G_2$ to contain this edge.
Observe that since the separation is minimal, there is a path in each $G_j$ ($j=1,2$) between the two vertices of $S$. Therefore, adding the edge does not increase the circumference of $G$. Also note that any valid colouring of the augmented graph will be valid for the original graph. 
Since $C$ is a clique, we may assume that $C \subseteq V(G_1)$. By induction, there is a $\floor{3\log_2 k}$-colouring of $G_1$, with $C$ precoloured, such that every monochromatic component of $G_1$ has order at most $k$
and every monochromatic component of $G_1$ that intersects $C$ is contained in $C$. This colours $S$.
By induction, there is a $\floor{3\log_2 k}$-colouring of $G_2$, with $S$ precoloured, such that every monochromatic component of $G_2$  has order at most $k$
and every monochromatic component of $G_2$  that intersects $S$ is contained in $S$. By combining the two colourings, every monochromatic component of $G$ has order at most $k$
and every monochromatic component of $G$ that intersects $C$ is contained in $C$, as required.
Now assume that $G$ is 3-connected.

Every 3-connected graph contains a cycle of length at least 4. Thus $k\geq 4$. 

If $G$ contains no cycle of length $k$, then apply the induction hypothesis for $k-1$; thus we may assume that $G$ contains a cycle $Q$ of length $k$. 
Let $\mathcal{A}$ be the set of cycles in $G$ of length  at least $\ceil{\tfrac12 (k-5)}$. 
Suppose that a cycle $A \in \mathcal{A}$ is disjoint from $Q$. Since $G$ is 3-connected, there are three disjoint paths between $A$ and $Q$. It follows that $G$ contains three cycles with total length at least 
$2(|A|+|Q|+3) > 3k$. Thus $G$ contains a cycle of length greater than $k$, which is a contradiction.
Hence, every cycle in $\mathcal{A}$ intersects $Q$.

Let $S:=V(Q)\cup C$. As shown above, $G':=G-S$ contains no cycle of length at least $\ceil{\frac{1}{2}(k-5)}$. 
Then $G'$ has circumference at most $\ceil{\frac{1}{2}(k-7)}$, which is at most $\floor{\tfrac12 k}$, which is at least 2. By induction (with no precoloured vertices), there is a $\floor{3\log_{2} \floor{\frac12 k}}$-colouring of $G'$ such that every monochromatic component of $G'$ has order at most $\floor{\frac12 k}$. Use a set of colours for $G'$ disjoint from the (at most two) preassigned colours for $C$. Use one new colour for $S\setminus C$, which has size at most $k$. In total, there are at most $\floor{3\log_2 \floor{\frac12 k} }+3 \leq  \floor{3\log_2 k}$ colours. Every monochromatic component of $G$ has order at most $k$, 
and every monochromatic component of $G$ that intersects $C$ is contained in $C$.
%
\end{proof}

Note that if $h$ is the function defined by the recurrence, 
\begin{align*}
h(k)  :=
\begin{cases}
2 & \text{ if }k=2\\
5 & \text{ if } 3\leq k\leq 9\\
h(\ceil{\frac12 (k-7)})+3 & \text{ if } k\geq 10,
\end{cases}
\end{align*}
then $\floor{3\log_2 k}$ can be replaced by $h(k)$ in \cref{Circumference}. 

\bigskip
We now show that the $O(\log k)$ bound in \cref{Circumference} is within a constant factor of optimal even in the setting of colourings of bounded monochromatic degree. The following result is implicit in \citep{OOW16}. We include the proof for completeness.

\begin{proposition}
\label{LowerBound}
For any integers $k,d\geq 1$ there is a graph $G_{k,d}$ with circumference at most $2^k$,
such that every $k$-colouring of $G_{k,d}$ contains a vertex of monochromatic degree at least $d$.
\end{proposition}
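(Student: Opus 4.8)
The plan is to construct $G_{k,d}$ by induction on $k$, doubling the circumference budget at each step. For the base case $k=1$, I would take $G_{1,d}$ to be the star $K_{1,d}$: it is a forest (circumference $2=2^1$), and in its unique $1$-colouring the centre has monochromatic degree $d$. For the inductive step I would introduce a new apex vertex $v$ and join it completely to $d$ vertex-disjoint copies $H_1,\dots,H_d$ of $G_{k-1,d}$, retaining all internal edges of each copy; this graph is $G_{k,d}$.

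The colouring lower bound is the easy half. Suppose for contradiction that some $k$-colouring of $G_{k,d}$ has every monochromatic degree at most $d-1$, and let $c$ be the colour of $v$. Since $v$ is adjacent to every vertex of every copy, its monochromatic degree equals the number of copy-vertices coloured $c$; hence fewer than $d$ vertices in total receive colour $c$. These vertices meet at most $d-1$ of the $d$ copies, so some copy $H_i$ avoids colour $c$ entirely and is therefore coloured using only the remaining $k-1$ colours. By the induction hypothesis $H_i\cong G_{k-1,d}$ contains a vertex of monochromatic degree at least $d$ within $H_i$, and a monochromatic neighbourhood inside $H_i$ is still monochromatic in $G_{k,d}$, a contradiction.

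The delicate half, and the step I expect to be the main obstacle, is the circumference bound. It is tempting to argue that since each copy has circumference at most $2^{k-1}$ and $v$ is a cut vertex, every cycle of $G_{k,d}$ lies in a single block $\{v\}\cup H_i$; this is correct, but bounding the circumference of such a block requires controlling the longest \emph{path} of $H_i$, not merely its longest cycle. Indeed, completing any $a$--$b$ path of $H_i$ through $v$ (legal because $v$ sees all of $H_i$) yields a cycle two edges longer than that path, and a graph of small circumference can still contain arbitrarily long paths. I would therefore strengthen the induction to also bound the order of a longest path of $G_{k,d}$, say by $2^{k+1}-1$ vertices: the base case $K_{1,d}$ has longest path on $3$ vertices, and since $v$ is a cut vertex a longest path of $G_{k,d}$ is a path in one copy, followed by $v$, followed by a path in another copy, giving the recursion $\ell_k\le 2\ell_{k-1}+1$. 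This path bound then feeds back to give circumference $\ell_{k-1}+1=2^k$, closing the induction; it is exactly this doubling that matches the $\Theta(\log k)$ threshold of \cref{Circumference}.
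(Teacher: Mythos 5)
Your proof is correct and follows essentially the same route as the paper: the identical construction (a dominant apex vertex over $d$ disjoint copies of $G_{k-1,d}$), the same strengthened induction hypothesis bounding the order of a longest path by $2^{k+1}-1$ (the paper phrases it as ``no path of order $2^{k+1}$''), and the same colouring argument distinguishing whether every copy sees the apex's colour. The subtlety you flag---that the circumference bound forces one to control longest paths, not just longest cycles, via the recursion $\ell_k\leq 2\ell_{k-1}+1$---is precisely the auxiliary invariant the paper carries through its induction.
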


\begin{proof}
We proceed by induction on $k\geq 1$ with $d$ fixed (and thus write $G_k$ instead of $G_{k,d}$), and with the additional property that $G_k$ contains no path of order $2^{k+1}$. For the base case, $k=1$, let $G_1$ be the star $K_{1,d}$, which has circumference 2 and no path of order 4. Every 1-colouring of $G_1$ contains a vertex of monochromatic degree $d$. Now assume that $k\geq 2$ and there is a graph $G_{k-1}$ with circumference $2^{k-1}$ and no path of order $2^k$, such that every $(k-1)$-colouring of $G_{k-1}$ contains a vertex of monochromatic degree at least $d$. Let $G_k$ be obtained from $d$ copies of $G_{k-1}$ by adding one new dominant vertex $v$.

If $C$ is a cycle in $G_k$ with length at least $2^k+1$, then $C$ is contained in one copy of $G_{k-1}$ plus $v$, and thus $G_{k-1}$ contains a path of order $2^k$, which is a contradiction. Thus $G_k$ has circumference at most $2^k$.

If $G_k$ contains a path $P$ of order $2^{k+1}$, then $v$ is in $P$, otherwise $P$ is contained in some copy of $G_{k-1}$. Hence $P-v$ includes a path component of order $\ceil{ \frac12 (2^{k+1}-1) } = 2^k$ contained in a copy of $G_{k-1}$, which is a contradiction. Hence  $G_k$ contains no path of order $2^{k+1}$.

Finally, consider a $k$-colouring of $G_k$. Say $v$ is blue. If every copy of $G_{k-1}$ contains a blue vertex, then $v$ has monochromatic degree $d$, and we are done. Otherwise, some copy of $G_{k-1}$ contains no blue vertex, in which case $G_{k-1}$ is $(k-1)$-coloured, and thus $G$ contains a monochromatic vertex of degree at least $d$.
\end{proof}

Let $f(k)$ be the minimum integer $c$ for which there exists an integer $d$ such that every graph with circumference at most $k$ has a $c$-colouring in which every monochromatic component has maximum degree at most $d$. In the language of \citet{OOW16}, $f(k)$ is the defective chromatic number of the class of graphs with circumference at most $k$. Obviously, bounded size implies bounded degree, so $f(k)\leq g(k)$. \cref{Circumference,LowerBound} imply that 
\begin{equation}
\label{LowerUpperBounds}
\floor{\log_2 k}+1 \leq f(k) \leq g(k)\leq \floor{3\log_2 k}.
\end{equation}

We conclude this paper by placing our results in the context of a conjecture of \citet{OOW16}. The \emph{closure} of a rooted tree $T$ is the graph obtained from $T$ by adding an edge between each ancestor and descendant. The \emph{tree-depth} of a graph $H$, denoted $\td(H)$, is the minimum depth of a rooted tree for which $H$ is a subgraph of the closure of $T$, where the \emph{depth} of a rooted tree $T$ is the maximum number of vertices in a root-to-leaf path. For a graph $H$, let $f(H)$ be the minimum integer $c$ such that there exists an integer $d$ such that every $H$-minor-free graph has a $c$-colouring in which every monochromatic component has maximum degree at most $d$. \citet{OOW16} proved that 
$f(H)\geq \td(H)-1$ for every connected graph $H$, and conjectured that
\begin{equation}
\label{conj}
f(H)=\td(H)-1.
\end{equation}
A graph has circumference $k$ if and only if it contains no $C_{k+1}$ minor; thus $f(k)=f(C_{k+1})$. 
It is easily seen that 
$$\td(C_{k+1})=1+\ceil{ \log _{2}(k+1)}=2+\floor{\log_2 k}.$$
Thus the lower bound $f(H)\geq \td(H)-1$, in the case of cycles, is equivalent to the lower bound on $f(k)$  
in \eqref{LowerUpperBounds}. And conjecture \eqref{conj}, in the case of cycles, asserts that equality holds. That is, 
$$f(k)=f(C_{k+1})=\td(C_{k+1})-1=\ceil{ \log _{2}(k+1)}.$$
Hence \cref{Circumference}, which proves that $f(k)\leq \floor{3\log_2k}$, is within a factor 3 of conjecture \eqref{conj} for excluded cycles. The best previous upper bound was linear in $k$.

We obtain similar results for graph classes excluding a fixed path, which were identified by \citet{OOW16} as a key case for which their bounds on $f$ were far apart. Let $P_k$ be the path on $k$ vertices. Then $\td(P_k)=\ceil{ \log _{2}(k+1)}$; see \citep{Sparsity}. Of course, a graph contains a $P_k$ minor if and only if it contains a $P_k$ subgraph. Thus conjecture \eqref{conj}, in the case of paths, asserts that $$f(P_{k+1})=\td(P_{k+1})-1=\ceil{ \log _{2}(k+2)}-1.$$ Every graph with no $P_{k+1}$-minor has circumference at most $k$. Thus \cref{Circumference} implies that $f(P_{k+1})\leq \floor{3 \log_2 k}$, which is within a factor of 3 of   conjecture \eqref{conj} for excluded paths. The best previous upper bound was linear in $k$.



\paragraph{Acknowledgement.} This research was completed at the Australasian Conference on Combinatorial Mathematical and Combinatorial Computing (40ACCMCC) held at The University of Newcastle, December 2016. Thanks to the conference organisers.


\def\soft#1{\leavevmode\setbox0=\hbox{h}\dimen7=\ht0\advance \dimen7
  by-1ex\relax\if t#1\relax\rlap{\raise.6\dimen7
  \hbox{\kern.3ex\char'47}}#1\relax\else\if T#1\relax
  \rlap{\raise.5\dimen7\hbox{\kern1.3ex\char'47}}#1\relax \else\if
  d#1\relax\rlap{\raise.5\dimen7\hbox{\kern.9ex \char'47}}#1\relax\else\if
  D#1\relax\rlap{\raise.5\dimen7 \hbox{\kern1.4ex\char'47}}#1\relax\else\if
  l#1\relax \rlap{\raise.5\dimen7\hbox{\kern.4ex\char'47}}#1\relax \else\if
  L#1\relax\rlap{\raise.5\dimen7\hbox{\kern.7ex
  \char'47}}#1\relax\else\message{accent \string\soft \space #1 not
  defined!}#1\relax\fi\fi\fi\fi\fi\fi}

\end{document}